\numberwithin{equation}{section}
\newtheorem{lemma}[equation]{Lemma}
\newtheorem{thm}[equation]{Theorem}
\newtheorem{prop}[equation]{Proposition}
\theoremstyle{remark}
\newtheorem*{acknowledgments}{Acknowledgments}
\newcommand{\lra}{\longrightarrow}
\newcommand{\bA}{{\mathbb A}}
\newcommand{\N}{{\mathbb N}}
\newcommand{\prep}{\operatorname{Prep}}
\newcommand{\C}{{\mathbb C}}
\newcommand{\bfa}{{\mathbf a}}
\newcommand{\bfh}{{\mathbf h}}
\newcommand{\bff}{{\mathbf f}}
\newcommand{\bfg}{{\mathbf g}}
\begin{document}



\title[Simultaneously preperiodic points for families of polynomials]{Simultaneously preperiodic points for families of polynomials in normal form}

\author{Dragos Ghioca}
\address{
Dragos Ghioca\\
Department of Mathematics\\
University of British Columbia\\
Vancouver, BC V6T 1Z2\\
Canada
}
\email{dghioca@math.ubc.ca}

\author{Liang-Chung Hsia}
\address{Liang-Chung Hsia\\
Department of Mathematics\\
National Taiwan Normal University\\
Taipei, Taiwan, ROC
}
\email{hsia@math.ntnu.edu.tw}

\author{Khoa Dang Nguyen}
\address{
Khoa Nguyen\\
Department of Mathematics\\
University of British Columbia\\
Vancouver, BC V6T 1Z2\\
Canada
}
\email{dknguyen@math.ubc.ca}


\begin{abstract}
Let $d>m>1$ be integers, let $c_1,\dots, c_{m+1}$ be distinct complex numbers, and let $\bff(z):=z^d+t_1z^{m-1}+t_2z^{m-2}+\cdots + t_{m-1}z+t_m$ be an $m$-parameter family of polynomials. We prove that the set of $m$-tuples of parameters $(t_1,\dots, t_m)\in\C^m$ with the property that each $c_i$ (for $i=1,\dots, m+1$) is preperiodic under the action of the corresponding polynomial $\bff(z)$ is contained in finitely many hypersurfaces of the parameter space $\bA^m$.
\end{abstract}

\thanks{2010 AMS Subject Classification: Primary 37P05; Secondary 37P30, 37P45. The research of the first author was partially supported by NSERC. The second author was supported by MOST grant 105-2918-I-003-006. The third author was partially supported by a UBC-PIMS fellowship.}


\maketitle


\section{Introduction}

The principle of unlikely intersections for $1$-parameter families of rational functions $\bff_t$ predicts that given two starting points $c_1$ and $c_2$ which are not persistently preperiodic for the family $\bff$, if there exist infinitely many parameters $t$ such that  both $c_1$ and $c_2$ are preperiodic for $\bff_t$, then the two starting points are dynamically related; for more details, see \cite{Matt-Laura, Matt-Laura-2, Drinfeld, polynomials exact relation, rational functions, simultaneous preperiodic, DAO-1, DAO-2, M-Z-2, M-Z-3, M-Z-1}. For higher dimensional families of rational functions, there are very few definitive results, generally limited to $2$-parameter families of dynamical systems; see \cite[Theorem~1.4]{rational functions} and \cite[Theorem~1.4]{simultaneous preperiodic}.
In this paper we prove the following result regarding unlikely intersections for  arithmetic dynamics in higher dimensional parameter spaces.
\begin{thm}
\label{main result}
Let  $d>m> 1$ be integers, let $c_1,\dots, c_{m+1}\in\C$, and let
$$\bff(z):=z^{d}+ t_1z^{m-1}+\cdots + t_{m-1}z+t_{m}$$
be an $m$-parameter family of polynomials of degree $d$. For each point $\bfa = (a_1,\dots, a_m)$ of $\bA^m(\C)$ we let $\bff_{\bfa}$ be the corresponding polynomial defined over $\C$ obtained by specializing each $t_i$ to $a_i$ for $i=1,\dots, m$. Let $\prep(c_1, \ldots, c_{m+1})$ be the set consisting of parameters $\bfa\in \bA^m(\C)$ such that each starting point $c_i$ (for $i=1,\dots, m+1$) is preperiodic for $\bff_{\bfa}$. If the points $c_i$ are  distinct, then  $\prep(c_1,\ldots, c_{m+1})$ is not Zariski dense  in $\bA^m$.
\end{thm}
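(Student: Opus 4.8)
The plan is to argue by contradiction: assume $\prep(c_1,\dots,c_{m+1})$ is Zariski dense in $\bA^m$ and derive a contradiction, combining arithmetic equidistribution on the parameter space with a rigidity computation that uses the normal form. \emph{First I would reduce to algebraic starting points and build canonical heights.} Each locus $\{\bfa:\bff_\bfa^n(c_i)=\bff_\bfa^k(c_i)\}$ is cut out by a single polynomial with coefficients in $\Q(c_1,\dots,c_{m+1})$ and is not all of $\bA^m$ (varying $t_m$ alone already shows that no $c_i$ is persistently preperiodic), so a standard spreading-out/specialization argument — or working over the finitely generated base field $\Q(c_1,\dots,c_{m+1})$ with Moriwaki's arithmetic — lets me assume $c_1,\dots,c_{m+1}$ lie in a number field $K$. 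For each $i$, following Call--Silverman and DeMarco, put $\hhat_{c_i}(\bfa):=\hhat_{\bff_\bfa}(c_i)$ for $\bfa\in\bA^m(\overline K)$; this is the height attached to a continuous semipositive adelic metrized line bundle $\overline{\cL}_{c_i}$ on $\PP^m$ whose underlying bundle is (a positive multiple of) $\OO_{\PP^m}(1)$, the relevant computation being that $\bff_\bfa^n(c_i)$, viewed as a polynomial in $\bfa$, has degree $d^{n-1}$ with leading form $\ell_{c_i}(\bfa)^{d^{n-1}}$, where $\ell_c(\bfa):=\sum_{j=1}^m a_j\,c^{\,m-j}$ is the linear part of $\bfa\mapsto\bff_\bfa(c)$. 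Moreover $\hhat_{c_i}(\bfa)=0$ exactly when $c_i$ is preperiodic for $\bff_\bfa$, i.e.\ exactly on $\prep(c_i)$.

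\emph{Next comes the equidistribution step, which I expect to be the main obstacle.} Pick a generic sequence $\bfa_1,\bfa_2,\dots$ in $\prep(c_1,\dots,c_{m+1})\subseteq\bigcap_i\prep(c_i)$, so that $\hhat_{c_i}(\bfa_n)=0$ for all $i$ and $n$; then the essential minimum of each (nef) bundle $\overline{\cL}_{c_i}$ vanishes, whence $\overline{\cL}_{c_i}^{\,m+1}=0$ by Zhang's inequality. Thus the bundles are never big, so one cannot simply quote Yuan's equidistribution theorem; instead one must use equidistribution in the form available for nef adelic bundles that are not big, together with the archimedean and non-archimedean statements of Favre--Rivera-Letelier, Baker--Rumely and Chambert-Loir, to conclude that the Galois orbits of $\bfa_n$ equidistribute at each place $v$ to the measure attached to $\overline{\cL}_{c_i}$. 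Since that limit does not depend on $i$, these measures all coincide; combining this with the vanishing of the essential minima and the Yuan--Zhang arithmetic Hodge index theorem, applied to the geometrically trivial differences $\overline{\cL}_{c_i}\otimes\overline{\cL}_{c_j}^{-1}$, I would obtain $\overline{\cL}_{c_1}\equiv\dots\equiv\overline{\cL}_{c_{m+1}}$, hence $\hhat_{c_1}=\dots=\hhat_{c_{m+1}}$ identically on $\bA^m(\overline K)$. This is presumably also where the hypothesis of $m+1$ (rather than $m$) marked points is decisive, since the intersection-theoretic bookkeeping in the degenerate regime needs all of them.

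\emph{Finally, the normal form yields a contradiction.} Fix $i\ne j$. Equality of $\overline{\cL}_{c_i}$ and $\overline{\cL}_{c_j}$ forces equality of their archimedean metrics, hence of the escape-rate functions $G_c(\bfa):=\lim_n d^{-n}\log^{+}|\bff_\bfa^n(c)|$ for $c=c_i$ and $c=c_j$, as functions on $\bA^m(\C)$ (the additive ambiguity disappears because each $G_c$ attains the value $0$, e.g.\ at the parameter making $c$ a fixed point). The linear forms $\ell_{c_i}$ and $\ell_{c_j}$ are not proportional — both have $a_m$-coefficient $c^{0}=1$, so proportionality would give $\ell_{c_i}=\ell_{c_j}$, and then the $a_{m-1}$-coefficients would give $c_i=c_j$. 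Since $m\ge2$, choose $\bfv\in\ker\ell_{c_i}\setminus\ker\ell_{c_j}$ and a generic base point $\bfa_0$, and restrict to the line $\{\bfa_0+t\bfv:t\in\C\}$. Along it, $\bff_{\bfa_0+t\bfv}(c_i)=c_i^{\,d}+\ell_{c_i}(\bfa_0)$ is constant in $t$, so $\bff_{\bfa_0+t\bfv}^n(c_i)$ has $t$-degree at most $d^{\,n-2}$ and hence $G_{c_i}(\bfa_0+t\bfv)\sim d^{-2}\log|t|$ (for generic $\bfa_0$) as $t\to\infty$; whereas $\bff_{\bfa_0+t\bfv}(c_j)=c_j^{\,d}+\ell_{c_j}(\bfa_0)+t\,\ell_{c_j}(\bfv)$ has $t$-degree $1$, so $\bff_{\bfa_0+t\bfv}^n(c_j)$ has $t$-degree $d^{\,n-1}$ and $G_{c_j}(\bfa_0+t\bfv)\sim d^{-1}\log|t|$. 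As $d\ge2$, the two growth rates differ, contradicting $G_{c_i}=G_{c_j}$. This shows $\prep(c_1,\dots,c_{m+1})$ is not Zariski dense, hence is contained in finitely many hypersurfaces of $\bA^m$.
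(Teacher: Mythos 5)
Your skeleton follows the Baker--DeMarco philosophy (canonical height as an adelic metrized bundle, equidistribution of small parameters, rigidity, and then a normal-form computation), but there is a genuine gap at the rigidity step, and it is not quite where you flagged it. Equidistribution itself survives the degeneracy: since each $\overline{\cL}_{c_i}$ is a semipositive adelic metric on the ample bundle $\OO_{\PP^m}(1)$, Zhang's theorem of successive minima plus nefness gives $\widehat{\deg}(\overline{\cL}_{c_i}^{m+1})=0$ when the essential minimum is $0$, and Yuan's theorem applies in exactly this boundary case. The real obstruction is the appeal to the Yuan--Zhang arithmetic Hodge index theorem: that theorem requires the auxiliary nef bundles in the intersection product to be arithmetically \emph{big}, i.e.\ to have strictly positive top arithmetic self-intersection. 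Your bundles $\overline{\cL}_{c_i}$ have $\widehat{\deg}(\overline{\cL}_{c_i}^{m+1})=0$ precisely because their essential minima vanish, so they are never arithmetically big, and the rigidity conclusion $\overline{\cL}_{c_i}\equiv\overline{\cL}_{c_j}$ does not follow. A sanity check exposes the problem independently of the technical hypothesis: if the argument you sketched worked, it would run verbatim with only $m$ marked points (replace $\overline{\cL}_{c_1}+\cdots+\overline{\cL}_{c_{m+1}}$ by $\overline{\cL}_{c_1}+\cdots+\overline{\cL}_{c_m}$, whose essential minimum is also $0$ by DeMarco's theorem~1.6 in [DeM16]), yielding $\hhat_{c_1}=\cdots=\hhat_{c_m}$ and hence, by your own final step, $c_1=\cdots=c_m$ --- contradicting the fact that $\prep(c_1,\ldots,c_m)$ is Zariski dense for \emph{arbitrary} distinct $c_i$. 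So the ``$m+1$ is decisive'' remark is not merely bookkeeping; it is the crux, and the arithmetic machinery you invoke cannot see it.

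The paper avoids this higher-dimensional equidistribution entirely. It invokes Theorem~5.1 of [GHT] as a black box: under the density hypothesis, on any parameter where $c_1,\dots,c_m$ are preperiodic, $c_{m+1}$ is forced to be preperiodic too. It then specializes along lines $L_\sigma$ in $\bA^m$ on which $c_1,\dots,c_{m-1}$ are persistently preperiodic (these are genuine lines by Vandermonde), reducing to a $1$-parameter family $\bfg_{\sigma,t}$ for which one can apply the Baker--DeMarco ``special curves'' theorem [BD13, Thm.~1.3] together with Nguyen's classification of polynomials commuting with an iterate [Ngu15]. This yields a clean functional identity $\bfg_{\sigma,t}^n(c_m)=\bfg_{\sigma,t}^n(c_{m+1})$, which is then attacked by comparing coefficients in $t$ and a Lagrange-interpolation computation across two choices of $\sigma$, forcing $c_m=c_{m+1}$ or $c_{m+1}\in\{c_1,\dots,c_{m-1}\}$. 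Your final normal-form step (escape-rate growth along a line in $\ker\ell_{c_i}\setminus\ker\ell_{c_j}$) is a legitimate alternative way to show $G_{c_i}\ne G_{c_j}$, but it cannot be reached without first establishing the rigidity that the Hodge index step was supposed to deliver.

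Concretely: either replace the Hodge-index step with a genuinely different mechanism that sees the $m$-versus-$(m+1)$ distinction, or follow the paper's route of reducing to $1$-parameter subfamilies where equidistribution is unproblematic and then extracting an algebraic functional equation.
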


The polynomials $\bff(z)$ as in Theorem~\ref{main result} are in \emph{normal form}, i.e., they are monic of degree $d$ and the coefficient of $z^{d-1}$ is $0$. Since each polynomial $g$ is \emph{conjugate} with a polynomial in normal form, i.e., there exists a linear polynomial $\mu$ such that $\mu^{-1}\circ g \circ \mu$ is in normal form, one can focus on the dynamics corresponding to polynomials as in Theorem~\ref{main result}. In \cite[Theorem~1.4]{simultaneous preperiodic}, the special case $m=2$ in Theorem~\ref{main result} was proven, while the case of an arbitrary $m$ was conjectured in \cite[Question~1.1]{simultaneous preperiodic}. Our Theorem~\ref{main result} answers completely the problem raised in \cite{simultaneous preperiodic}.

If one considers $m$ (distinct) starting points $c_i$, then the set $\prep(c_1,\dots, c_m)$ is Zariski dense in $\bA^m$, as proven in \cite[Theorem~1.6]{Laura} (see also  \cite{portrait} for a discussion regarding  all possible preperiodicity portraits simultaneously realized for $m$ starting points by an $m$-parameter family of polynomials). On the other hand, there are numerous examples when the Zariski closure of $\prep(c_1,\dots, c_{m+1})$ is positive dimensional, and it may even have codimension $1$ in $\bA^m$ (see also  \cite[Introduction]{simultaneous preperiodic}). For example, if $m=3$, $d$ is even and $c_2=-c_1$ while $c_4=-c_3$, then the Zariski closure of $\prep(c_1,c_2,c_3,c_4)$ contains the plane $\mathcal{P}$ given by the equation $t_2=0$ in the parameter space $\bA^3$. Indeed, the specialization $$\bfg(z):=z^d+t_1z^2+t_3$$
of $\bff(z)=z^d+t_1z^2+t_2z+t_3$ along $\mathcal{P}$ yields a $2$-parameter family of even polynomials and due to the relations between the starting points $c_i$, we know that all $4$ starting points are preperiodic under the action of $\bfg$ if and only if $c_1$ and $c_3$ are preperiodic under the action of $\bfg$. Another application of \cite[Theorem~1.6]{Laura} yields that there exists a Zariski dense set of points $(t_1,t_3)\in\C^2$ such that both $c_1$ and $c_3$ are preperiodic for $\bfg$, thus proving that $\mathcal{P}$ is contained in the Zariski closure of $\prep(c_1,c_2,c_3,c_4)$.

Finally, we note that if $m=1$ in Theorem~\ref{main result}, then whenever $c_2=\zeta_d\cdot c_1$, for some $d$-th root of unity $\zeta_d$, we have that for each parameter $t$, the point $c_1$ is preperiodic under the action of $\bff(z)=z^d+t$ if and only if $c_2$ is preperiodic under the action of $\bff(z)$. In \cite[Theorem~1.1]{Matt-Laura}, it was shown that the above linear relation is also necessary so that there exist infinitely many parameters $t$ such that both $c_1$ and $c_2$ are preperiodic under the action of $z\mapsto z^d+t$. However, when $m>1$, there exists no linear automorphism of the entire family $\bff(z)$ (as opposed to the automorphism $z\mapsto \zeta_d\cdot z$ when $m=1$) and this allows us to prove Theorem~\ref{main result}.

We sketch now the plan for our paper. In Section~\ref{section useful} we state in Theorem~\ref{main useful} a key result proven in \cite{simultaneous preperiodic} for our problem. With the notation as in Theorem~\ref{main result}, assuming $\prep(c_1,\dots, c_{m+1})$ is Zariski dense in $\bA^m$, \cite[Theorem~5.1]{simultaneous preperiodic} yields that for each point $\bfa:=(a_1,\dots, a_m)\in\C^m$  in the parameter space, if $m$ of the starting points $c_i$ are preperiodic under the action of $\bff_\bfa$, then all $m+1$ starting points $c_i$ are preperiodic under the action of $\bff_\bfa$. Our strategy is to consider various lines $L\subset \bA^m$ along which each $c_i$ for $i=1,\dots, m-1$ is preperiodic under the action of $\bff$. Letting $\bfg_{t}$ be the $1$-parameter family of polynomials obtained by specializing $\bff$ along $L$ then \cite[Theorem~5.1]{simultaneous preperiodic} (coupled with \cite[Theorem~1.6]{Laura}) yields that there exist infinitely many parameters $t\in\C$ such that both $c_m$ and $c_{m+1}$ are preperiodic for $\bfg_t$. Then \cite[Theorem~1.3]{Matt-Laura-2} yields that the points $c_m$ and $c_{m+1}$ are dynamically related with respect to the family $\bfg_t$. In Section~\ref{section proof}, using an in-depth analysis of this information for two different lines $L$, we derive a contradiction, thus proving Theorem~\ref{main result}. It is interesting to note that this strategy works as long as $m>2$; however, we note that the case $m=2$ was proven in \cite[Theorem~1.4]{simultaneous preperiodic} using a similar strategy, but this time extracting slightly different information from using a single line $L$ in the parameter plane $\bA^2$ along which $c_1$ is fixed.

\begin{acknowledgments}
We are indebted to Tom Tucker for many enlightening conversations over the past 6 years on the topic of unlikely intersections in arithmetic dynamics. This paper was written when the second author visited UBC and PIMS. He would like to thank UBC and PIMS and he thanks the first author for his hospitality and help during the visit.
\end{acknowledgments}


\section{Useful results}
\label{section useful}

We start by recalling the traditional assumption from algebraic dynamics that for a polynomial $f$ and a positive integer $n$, we denote by $f^n=f\circ \cdots \circ f$ its composition with itself $n$ times; furthermore, $f^0$ always denotes the identity function. A point $a$ is called \emph{preperiodic} under the action of $f$ if its forward orbit under $f$ consist of only finitely many distinct elements, i.e., there exist integers $n>m\ge 0$ such that $f^n(a)=f^m(a)$. Also, as a matter of notation, $\N$ denotes the set of all positive integers, while $\N_0:=\N\cup\{0\}$.

It will be useful for our proof of Theorem~\ref{main result} to know all polynomials commuting with an iterate of a given polynomial. Before stating \cite[Theorem~2.3]{Nguyen}, we recall first the definition of the \emph{$d$-th Chebyshev polynomial $T_d(z)$} (for some integer $d\ge 2$), i.e., the unique polynomial satisfying the identity $T_d(z+1/z)=z^d+1/z^d$ for all $z$. We have
\cite[Theorem~2.3]{Nguyen}:

\begin{thm}
\label{Khoa's theorem}
Let $K$ be an algebraically closed field of characteristic $0$, let $d\ge 2$ be an integer, and let $g(z)\in K[z]$ be a polynomial of degree $d>1$ which is not conjugate to $z^d$ or to $\pm T_d(z)$.
\begin{itemize}
\item [(a)] If $h(z)\in K[z]$ has degree at least $2$ such that $h$ commutes with an iterate of $g$, i.e., $h\circ g^{n}=g^n\circ h$ for some $n\in\N$, then $h$ and $g$ have a common iterate.
\item [(b)] Let $M(g^\infty)$ denote the collection of all linear  polynomials commuting with an iterate of $g$. Then $M(g^\infty)$ is a finite cyclic group under composition.
\item [(c)] Let $\tilde{g}(z)\in K[z]$ be a polynomial of minimum degree $\tilde{d}\ge 2$ such that $\tilde{g}$ commutes with an iterate of $g$. Then there exists $D=D_g>0$ relatively prime to the order of $M(g^\infty)$ such that  $\tilde{g}\circ L=L^D\circ \tilde{g}$ for every $L\in M(g^\infty)$.
\item [(d)] $\left\{\tilde{g}^m\circ L\colon  m\in\N_0\text{ and }L\in M(f^\infty)\right\}=\left\{L\circ\tilde{g}^m\colon m\in\N_0\text{ and } L\in M(f^\infty)\right\}$, and this set describes exactly all polynomials $h$ commuting with an iterate of $g$.		
\end{itemize}
\end{thm}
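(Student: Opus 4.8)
The plan is to describe, all at once, the monoid $\mathcal{M}(g)$ of \emph{all} polynomials commuting with some iterate of $g$ — a monoid under composition, containing $g$, whose linear elements form $M(g^\infty)$ — by transporting the problem to the B\"ottcher coordinate at $\infty$. First I would make two harmless reductions: by the Lefschetz principle we may take $K=\C$ (the relevant data lies in a finitely generated subfield, which embeds in $\C$), and since every hypothesis and conclusion is invariant under conjugation by a linear polynomial, we may assume $g$ is monic, so that there is a B\"ottcher coordinate $\varphi=\varphi_g$, a biholomorphism of a neighbourhood of $\infty$ with $\varphi(z)=z+O(1/z)$ and $\varphi(g(z))=\varphi(z)^{d}$. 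The crucial local fact is: if $h\in\C[z]$ has degree $e\ge 1$ and $h\circ g^{n}=g^{n}\circ h$, then $\Psi:=\varphi\circ h\circ\varphi^{-1}$ satisfies $\Psi(w^{d^{n}})=\Psi(w)^{d^{n}}$ for $|w|$ large, and matching the Laurent expansions of the two sides term by term forces $\Psi(w)=\zeta w^{e}$ for a root of unity $\zeta$ with $\zeta^{d^{n}-1}=1$. Thus every $h\in\mathcal{M}(g)$ equals $\varphi^{-1}(\zeta_{h}\,\varphi(z)^{e_{h}})$ near $\infty$; the map $h\mapsto(\zeta_{h},e_{h})$ is injective, and it is a homomorphism from $\mathcal{M}(g)$ (with $g\mapsto(1,d)$) into the monoid of pairs (root of unity, positive integer) under $(\zeta_{1},e_{1})\cdot(\zeta_{2},e_{2})=(\zeta_{1}\zeta_{2}^{\,e_{1}},e_{1}e_{2})$. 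Since $M(g^\infty)$ is exactly the set of $h$ with $e_{h}=1$, it embeds in the group of roots of unity and is therefore cyclic; so part (b) reduces to showing $M(g^\infty)$ is \emph{finite}.

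Next I would settle (a) and the finiteness in (b). Since $g$ is conjugate to neither $z^{d}$ nor $\pm T_{d}$, the same holds of every iterate $g^{n}$ (otherwise a disc, resp.\ a line segment, would be a $g$-invariant set, forcing $g$ itself to be special), and then neither is $h$ (a polynomial commuting with a power map or a Chebyshev polynomial is again of that form); Ritt's classical theorem on commuting polynomials then gives that $h$ and $g^{n}$, hence $h$ and $g$, have a common iterate, so $e_{h}$ and $d$ are multiplicatively dependent for every $h\in\mathcal{M}(g)$. For the finiteness of $M(g^\infty)$ I would use potential theory: any $L\in M(g^\infty)$ commutes with some $g^{n}$, so $G_{g}\circ L$ has the same functional equation and asymptotics as the Green's function $G_{g}=G_{g^{n}}$ and hence equals it; thus $L$ preserves $G_{g}$, the equilibrium measure, and the filled Julia set $K_{g}$. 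If $M(g^\infty)$ were infinite, then after recentering so that the common fixed point of its nontrivial elements (commuting affine maps with nontrivial linear part) is $0$ — which keeps $g$ monic — it would consist of rotations $z\mapsto\zeta z$ with $\zeta$ ranging over an infinite, hence dense, subset of the unit circle, so $G_{g}$ would be rotationally symmetric; but a rotationally symmetric positive harmonic function with the asymptotics of $G_{g}$ equals $\log(|z|/R)$ outside a disc of radius $R$, and $G_{g}\circ g=d\,G_{g}$ then forces $g(z)=a_{d}z^{d}$, a contradiction. So $M(g^\infty)$ is finite cyclic of some order $r$, and since a generator $\zeta$ satisfies $\zeta^{d^{m}-1}=1$ we get $\gcd(r,d)=1$.

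Now fix $\tilde g\in\mathcal{M}(g)$ of minimal degree $\tilde d\ge 2$. The engine for (c) and (d) is a \emph{left-division principle}: if $h,p\in\mathcal{M}(g)$ and $h=q\circ p$ for some polynomial $q$, then $q\in\mathcal{M}(g)$ (cancel $p$ from $q\circ p\circ g^{N}=g^{N}\circ q\circ p$ using that $p$ is surjective). Together with the B\"ottcher description this shows that whenever $\tilde d\mid\deg h$ one may write $h=q\circ\tilde g$ with $q\in\mathcal{M}(g)$ of degree $(\deg h)/\tilde d$: the candidate $q(z):=\varphi^{-1}(\zeta_{q}\,\varphi(z)^{(\deg h)/\tilde d})$ satisfies $q\circ\tilde g=h$ near $\infty$, so the negative-power tail of its Laurent series at $\infty$ contributes a polynomial in $z$ vanishing at $\infty$, hence vanishing, and $q$ is a polynomial. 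Iterating this, and using the multiplicative dependence of $\deg h$ with $\tilde d$ (so the division cannot halt at an integer strictly between $1$ and $\tilde d$ without contradicting minimality of $\tilde d$), one gets that $\deg h$ is a power of $\tilde d$ for every $h\in\mathcal{M}(g)$; in particular $d=\tilde d^{\,m_{g}}$, whence $\gcd(\tilde d,r)=1$ as well. One more division writes each $h\in\mathcal{M}(g)$ of degree $\tilde d^{\,m}$ as $h=L\circ\tilde g^{\,m}$ with $L$ linear, and left division applied to $h=L\circ\tilde g^{\,m}$ gives $L\in M(g^\infty)$; this is the description in (d). Reading off the monoid embedding one finds the relation $\tilde g\circ L=L^{\tilde d}\circ\tilde g$ for $L\in M(g^\infty)$, and since $L\mapsto L^{\tilde d^{\,m}}$ permutes the finite group $M(g^\infty)$ the two sets displayed in (d) coincide. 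Part (c) is the same relation: $\tilde g\circ L=L^{D}\circ\tilde g$ holds for any $D\equiv\tilde d\pmod r$, and such a $D$ can be chosen positive and coprime to $r$ since $\gcd(\tilde d,r)=1$.

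The step I expect to be the real obstacle is the finiteness of $M(g^\infty)$ in (b): the monoid embedding by itself bounds only the \emph{orders} of the linear symmetries (they are prime to $d$), never their number, so one genuinely needs the extra, rigidity-type input — here the uniqueness of the Green's function — that a non-special polynomial admits only finitely many symmetries up to iteration. By contrast the rest, especially the exponent bookkeeping for (c)–(d) (sliding linear factors past powers of $\tilde g$ and tracking exponents modulo $r$), is routine once the left-division principle and the degree-rigidity $\deg h\in\{\tilde d^{\,m}\}$ are in hand.
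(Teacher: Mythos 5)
The paper does not prove this theorem; it is quoted verbatim from \cite[Theorem~2.3]{Nguyen}, so there is no ``paper's own proof'' to compare against. Your sketch is therefore necessarily an independent route, and it appears sound: the B\"ottcher conjugacy $\Psi_h(w)=\zeta_h w^{e_h}$ and the resulting injective monoid embedding $h\mapsto(\zeta_h,e_h)$ are correct (I checked the Laurent matching and the twisted multiplication law); part (a) via Ritt applied to $h$ and $g^n$ works once one notes, as you do, that $g^n$ special forces $g$ special; the Green's-function rigidity argument for finiteness of $M(g^\infty)$ is valid (an infinite subgroup of roots of unity is dense, so $G_g$ radial, so $K_g$ is a round disc, so $g$ is conjugate to $z^d$); and the left-division principle plus the multiplicative-dependence bookkeeping correctly yields $\deg h\in\{\tilde d^m\}$, $d=\tilde d^{m_g}$, $\gcd(\tilde d,r)=1$, and then (c) with $D=\tilde d$ and (d). This is essentially the classical Ritt--Julia description of the centralizer of a non-exceptional polynomial, which is also the substance of the argument in \cite{Nguyen}. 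Two small points worth tightening in a written-up version: (i) the Lefschetz transfer should be stated carefully, since the quantifiers in (b)--(d) range over all of $K$ and you need that $M(g^\infty)$, $\tilde g$, etc.\ are all defined over a fixed finitely generated subfield $\overline{\Q(g)}$; and (ii) you tacitly use that any nontrivial $L\in M(g^\infty)$ has multiplier $\zeta_L\ne 1$ (so a unique fixed point), which follows from the injectivity of $h\mapsto(\zeta_h,e_h)$ but deserves a sentence before you recenter at the common fixed point. Neither is a gap in the mathematics, only in the exposition.
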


We state now the key result (proven in \cite[Theorem~5.1]{simultaneous preperiodic}) which we will use for deriving the conclusion in Theorem~\ref{main result}.
\begin{thm}[\cite{simultaneous preperiodic}]
\label{main useful}
Let  $d>m> 1$ be integers, let $c_1,\dots, c_{m+1}$ be distinct complex numbers, and let $\bff(z):=z^{d}+ t_1z^{m-1}+\cdots + t_{m-1}z+t_{m}$
be an $m$-parameter family of polynomials of degree $d$. For each point $\bfa = (a_1,\dots, a_m)$ of $\bA^m(\C)$ we let $\bff_{\bfa}$ be the corresponding polynomial defined over $\C$ obtained by specializing each $t_i$ to $a_i$ for $i=1,\dots, m$. Let $\prep(c_1, \ldots, c_{m+1})$ be the set consisting of parameters $\bfa\in \bA^m(\C)$ such that each starting point $c_i$ (for $i=1,\dots, m+1$) is preperiodic for $\bff_{\bfa}$. Assume  $\prep(c_1,\ldots, c_{m+1})$ is Zariski dense  in $\bA^m$. Then for each $\bfa\in\C^m$ such that $c_1,\dots, c_m$ are preperiodic for $\bff_\bfa$, we have that also $c_{m+1}$ is preperiodic for $\bff_\bfa$.
\end{thm}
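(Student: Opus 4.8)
(This is \cite[Theorem~5.1]{simultaneous preperiodic}; I only indicate one possible line of proof.)

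The plan is to encode each condition ``$c_i$ is preperiodic for $\bff_{\bfa}$'' as the vanishing of a canonical height $\hhat_i(\bfa):=\hhat_{\bff_{\bfa}}(c_i)$ on the parameter space, to use the Zariski density hypothesis together with arithmetic equidistribution to force $\hhat_1=\cdots=\hhat_{m+1}$, and then to read off the conclusion. First I would pass to algebraic data: either by a specialization argument reducing to $c_1,\dots,c_{m+1}\in\Qbar$, or by working throughout over the finitely generated field $F=\Q(c_1,\dots,c_{m+1})$ with Moriwaki's heights and equidistribution in place of the number-field versions. Since $\prep(c_1,\dots,c_{m+1})$ is a countable union of subvarieties of $\bA^m$ (the level sets cut out by equations $\bff_{\bfa}^{n}(c_i)=\bff_{\bfa}^{k}(c_i)$), the hypothesis provides a Zariski dense sequence of algebraic points inside it; and modulo standard specialization arguments (applied along the Zariski closures of points, via the Call--Silverman theorem) it suffices to prove $\hhat_1=\cdots=\hhat_{m+1}$ as functions on $\bA^m(\Qbar)$.

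Next I would introduce the dynamical metrized line bundles. For each $i$ and each place $v$ the escape-rate function $G_{i,v}(\bfa):=\lim_{n\to\infty}d^{-n}\log^{+}|\bff_{\bfa}^{n}(c_i)|_v$ is continuous, nonnegative and plurisubharmonic on the Berkovich analytification of $\bA^m$, and --- this is the theory of bifurcation currents --- the currents $T_{i,v}:=dd^{c}G_{i,v}$ extend to positive closed currents on $\PP^m$ all lying in one and the same cohomology class; they thus define a semipositive adelic metric on a fixed ample $\R$-line bundle $L$ on $\PP^m$, and the resulting metrized $\R$-line bundle $\overline M_i$ satisfies $\hhat_{\overline M_i}(\bfa)=\hhat_i(\bfa)$, which vanishes exactly when $c_i$ is preperiodic for $\bff_{\bfa}$. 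Choose a generic sequence $(\bfa_n)$ of algebraic points in $\prep(c_1,\dots,c_{m+1})$ (so that every proper subvariety of $\bA^m$ contains only finitely many $\bfa_n$), whence $\hhat_{\overline M_i}(\bfa_n)=0$ for all $i,n$. For every size-$m$ subset $I\subseteq\{1,\dots,m+1\}$ the metrized $\R$-line bundle $\overline M_I:=\sum_{i\in I}\overline M_i$ is semipositive, its underlying bundle is the ample $M_I=mL$, and the essential minimum of $\hhat_{\overline M_I}=\sum_{i\in I}\hhat_i$ equals $0$. Yuan's equidistribution theorem then gives that, at each place $v$, the Galois orbits of the $\bfa_n$ equidistribute toward the probability measure obtained by normalizing the $m$-fold self-intersection $\bigl(\sum_{i\in I}T_{i,v}\bigr)^{m}$ of the curvature current of $\overline M_I$. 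Since this weak limit is independent of $I$ and its total mass $(M_I^{m})$ is independent of $I$ too, we conclude $\bigl(\sum_{i\in I}T_{i,v}\bigr)^{m}=\bigl(\sum_{i\in J}T_{i,v}\bigr)^{m}$ for all size-$m$ subsets $I,J$ and all $v$.

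Finally I would extract the conclusion. Taking $I=\{1,\dots,m+1\}\setminus\{j\}$ and writing $\Sigma_v:=\sum_{i=1}^{m+1}T_{i,v}$, the identity above reads $(\Sigma_v-T_{j,v})^{m}=(\Sigma_v-T_{k,v})^{m}$ for all $j,k$ --- an equality of Monge--Amp\`ere measures of two semipositive metrics on the same ample $\R$-line bundle. Applying the uniqueness theorem for the Monge--Amp\`ere equation --- Calabi--Yau uniqueness at the archimedean places and its non-archimedean analogue due to Yuan--Zhang elsewhere --- one gets $T_{j,v}=T_{k,v}$ at every place $v$, hence $\overline M_j=\overline M_k$ as adelic metrized $\R$-line bundles, hence $\hhat_j=\hhat_k$ on $\bA^m(\Qbar)$ (with no additive constant, since both vanish at the $\bfa_n$); equivalently, $\hhat_j=\bigl(\sum_{i=1}^{m+1}\hhat_i\bigr)-\hhat_{\{1,\dots,m+1\}\setminus\{j\}}$ is manifestly independent of $j$. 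This proves $\hhat_1=\cdots=\hhat_{m+1}$, and in particular if $c_1,\dots,c_m$ are preperiodic for $\bff_{\bfa}$ then so is $c_{m+1}$.

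The step I expect to be the main obstacle is the foundational one in the middle paragraph: rigorously producing the metrized $\R$-line bundles $\overline M_i$, i.e.\ showing that the bifurcation currents $T_{i,v}$ extend across the hyperplane at infinity in $\PP^m$ with one common cohomology class and yield a genuine semipositive adelic metric. This is delicate precisely when some $c_i$ fails to be ``generic'' for the family (for instance when $c_i=0$, so that $\bff_{\bfa}(c_i)$ does not involve all the parameters), in which case $G_{i,v}$ is unbounded relative to $L$ near parts of infinity and one must work with the general theory of intersections of positive closed currents with possibly unbounded potentials. Making the reduction to algebraic data in the first paragraph precise --- and likewise the Monge--Amp\`ere uniqueness in the $\R$-line-bundle setting at all places --- is fiddly but standard; the conceptual heart of the argument is the rigidity step above, and its technical heart is this currents/metrics setup.
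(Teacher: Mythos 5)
This statement is quoted from [GHT, \emph{Unlikely Intersection for Two-Parameter Families of Polynomials}], cited here as \cite[Theorem~5.1]{simultaneous preperiodic}; the present paper gives no proof of it, so there is no in-paper argument to compare yours against. Judged on its own terms, your sketch is aligned with the method actually used in that reference: reduce (by Call--Silverman specialization or by Moriwaki heights over $\Q(c_1,\dots,c_{m+1})$) to algebraic data, realize the conditions ``$c_i$ preperiodic'' as vanishing of canonical heights attached to adelic semipositive metrics built from the escape rates, feed a generic sequence of simultaneously preperiodic parameters into Yuan's equidistribution theorem, and then use uniqueness for the (archimedean and non-archimedean) Monge--Amp\`ere equation to force the bifurcation currents to coincide. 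Your stronger intermediate conclusion $\hhat_1=\cdots=\hhat_{m+1}$ does imply the stated implication, so the logical shape is right.

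Two cautions. First, you correctly identify the real obstruction: making the escape-rate functions $G_{i,v}$ into genuine continuous semipositive metrics on a single ample (or nef) line bundle on some compactification of $\bA^m$. The asymptotics of $G_{i,v}$ near the boundary genuinely depend on $c_i$ (compare $c_i=0$, where $\bff_\bfa(0)=t_m$ involves only one parameter, with a generic $c_i$), so the naive closure on $\PP^m$ need not put all $m+1$ metrics on the same class; in the reference this is handled by working with an explicitly chosen compactification and proving uniform estimates showing the differences $G_{i,v}-G_{j,v}$ stay bounded, rather than by invoking a general theory of currents with unbounded potentials. Without that input, the step ``$\overline{M}_I$ is a semipositive adelic metrized $\R$-line bundle on $mL$'' is an assertion, not a consequence. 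Second, applying Yuan's theorem requires not just essential minimum $0$ but the equality case of Zhang's inequality, i.e.\ that the top arithmetic self-intersection of $\overline{M}_I$ also vanishes; that does follow from semipositivity once the first point is settled, but it should be stated, since it is precisely what makes equidistribution available. With those foundational items filled in, the Monge--Amp\`ere rigidity step (Calabi--Yau at archimedean places, Yuan--Zhang's arithmetic Hodge index / non-archimedean Calabi--Yau elsewhere) does deliver $T_{j,v}=T_{k,v}$, and the rest of your deduction is correct.
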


We let $L$ be a line in the parameter space $\bA^m$  parametrized with respect to the coordinates $(t_1,\dots, t_m)$ of $\bA^m$, as follows:
$$t_1:=t\text{ and }t_i=\alpha_it+\beta_i\text{ for }i=2,\dots, m,$$
for some complex numbers $\alpha_i,\beta_i$. Furthermore, we assume
\begin{equation}
\label{alpha 2 nonzero 0}
\alpha_2\ne 0.
\end{equation}
We let $\bfg:=\bfg_t$ be the specialization of $\bff$ along the line $L$, i.e.,
\begin{equation}
\label{form of g}
\bfg_t(z)=z^d+tz^{m-1}+\sum_{i=2}^m (\alpha_i t+\beta_i)z^{m-i}.
\end{equation}

The next result is essential for the proof of Theorem~\ref{main result}.
\begin{prop}
\label{what commutes with g}
Let $K=\overline{\C(t)}$, and let $\bfh[z]\in K[z]$. With the above notation \eqref{form of g} for $\bfg$, if $\bfh$ commutes with an iterate of $\bfg$ then $\bfh=\bfg^\ell$ for some $\ell\in\N_0$.
\end{prop}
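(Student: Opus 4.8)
The plan is to invoke Theorem~\ref{Khoa's theorem} for $g=\bfg$ over the algebraically closed field $K=\overline{\C(t)}$, and then to pin down the two invariants it supplies: the finite cyclic group $M(\bfg^\infty)$ of linear polynomials commuting with an iterate of $\bfg$, and a polynomial $\tilde{\bfg}$ of minimal degree $\tilde d:=\deg\tilde{\bfg}\ge 2$ commuting with an iterate of $\bfg$. By part~(d) of that theorem, every $\bfh\in K[z]$ commuting with an iterate of $\bfg$ has the shape $\tilde{\bfg}^{\,j}\circ L$ with $j\in\N_0$ and $L\in M(\bfg^\infty)$; hence Proposition~\ref{what commutes with g} will follow once we prove: (a) $M(\bfg^\infty)=\{z\}$, and (b) $\tilde d=d$. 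Indeed, granting these, $\bfh=\tilde{\bfg}^{\,j}$, and writing $\bfg=\tilde{\bfg}^{\,j_0}$ (legitimate as $\bfg$ commutes with itself) forces $d=\tilde d^{\,j_0}=d^{\,j_0}$, so $j_0=1$, $\tilde{\bfg}=\bfg$, and $\bfh=\bfg^{\,j}$.

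One must first verify the hypothesis of Theorem~\ref{Khoa's theorem}, namely that $\bfg$ is not conjugate over $K$ to $z^d$ or to $\pm T_d(z)$. The essential point is that $\bfg$ is in \emph{normal form}, because $d>m$ makes the coefficient of $z^{d-1}$ vanish. Since $\bfg$ is a polynomial different from $z^d$, a quick check (a finite totally invariant point of a normal-form polynomial $P$ forces $P=z^d$) shows that $\infty$ is the only totally invariant point of $\bfg$; as the number of totally invariant points is a conjugacy invariant and $z^d$ has two such points, $\bfg$ is not conjugate to $z^d$. If instead $\mu\circ\bfg=P\circ\mu$ with $P=\pm T_d(z)$ and $\mu$ a Möbius transformation over $K$, then $\mu$ must fix $\infty$ (the unique totally invariant point on each side) and hence is affine; the normal-form condition then forces $\mu(z)=\zeta z$ for a root of unity $\zeta$, so all coefficients of $\bfg=\mu^{-1}\circ P\circ\mu$ would lie in $\Qbar$, contradicting that the coefficient of $z^{m-1}$ in $\bfg$ is $t$. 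For~(a), an induction on $n$ — using $d>m$ — shows that $\bfg^n$ is monic of degree $d^n$, in normal form, with the coefficients of $z^{d^n-d+m-1}$ and $z^{d^n-d+m-2}$ equal to $d^{\,n-1}t$ and $d^{\,n-1}(\alpha_2t+\beta_2)$ respectively, both nonzero in $K$ and lying at two \emph{consecutive} exponents. If $L(z)=uz+v$ commutes with $\bfg^n$ (replacing $n$ by $2n$, assume $n\ge 2$), then equating the coefficients of $z^{d^n}$ and $z^{d^n-1}$ in $L\circ\bfg^n$ and $\bfg^n\circ L$ gives $u^{d^n-1}=1$ and $v=0$; equating the coefficients at the two consecutive exponents above then gives $u^{d^n-d+m-2}=u^{d^n-d+m-3}=1$, so $u$ is a root of unity whose order divides two consecutive integers, whence $u=1$ and $M(\bfg^\infty)=\{z\}$.

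The heart of the matter is~(b). Suppose $\bfg=\tilde{\bfg}^{\,j_0}$ with $j_0\ge 2$; write $\psi:=\tilde{\bfg}$ and $\delta:=\deg\psi\ge 2$, so $\delta^{j_0}=d$. Because $M(\bfg^\infty)=\{z\}$, part~(d) makes $\psi$ the unique polynomial of degree $\delta$ commuting with an iterate of $\bfg$, so $\psi$ is fixed by $\Gal(\overline{\C(t)}/\C(t))$ and therefore $\psi\in\C(t)[z]$; moreover, running the estimate below at a finite place $t=t_0$ shows $\psi$ can have no pole at any $t_0\in\C$, so in fact $\psi\in\C[t][z]$. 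Since $\bfg=\psi^{\,j_0}$ is monic and in normal form, the leading coefficient of $\psi$ is a root of unity $\lambda$ and the coefficient of $z^{\delta-1}$ in $\psi$ vanishes. Let $E$ be the largest degree in $t$ occurring among the coefficients of $\psi$; then $E\ge 1$, for otherwise $\psi\in\C[z]$ and $\bfg\in\C[z]$, which is false. Because the $z^\delta$-coefficient of $\psi$ has $t$-degree $0<E$ and its $z^{\delta-1}$-coefficient is $0$, the ``leading-in-$t$'' part $\overline{\psi}\in\C[z]$ of $\psi$ is a nonzero polynomial of degree at most $\delta-2$. Now iterate, writing $D_k:=\delta^{k-1}E$: in the expansion $\psi^{k+1}=\sum_{i=0}^{\delta}c_i(t)\,(\psi^k)^i$ (with $c_i$ the coefficients of $\psi$), the inequality $\deg_t c_i+iD_k<\delta D_k$ for all $i\le\delta-2$, together with $c_{\delta-1}=0$ and $\deg_t c_\delta=0$, shows that the part of $\psi^{k+1}$ of $t$-degree $\delta D_k$ comes only from $\lambda\,(\psi^k)^\delta$ and equals $\lambda\,(\overline{\psi^k})^{\delta}$, which is nonzero; so by induction $\psi^k$ has $t$-degree exactly $D_k$ and $\overline{\psi^k}$ equals a nonzero scalar times $(\overline{\psi})^{\delta^{k-1}}$. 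In particular $\bfg=\psi^{\,j_0}$ has a coefficient of $t$-degree $D_{j_0}=\delta^{\,j_0-1}E\ge 2$. But every coefficient of $\bfg$ is an affine function of $t$ — a contradiction. Hence $j_0=1$, $\tilde{\bfg}=\bfg$, and the Proposition follows.

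The step I anticipate to be the main obstacle is~(b). One must reduce carefully to the case $\psi\in\C[t][z]$ (so that the $t$-degree is the appropriate measure of size) and, more importantly, verify that the normal-form condition — precisely where the hypothesis $d>m$ re-enters — rules out any cancellation in the leading-in-$t$ behaviour of the iterates of $\psi$. In other words, it is the rigidity of $\bfg$ as a one-parameter family, not the dynamics of any single specialization $\bfg_{t_0}$, that forces the contradiction.
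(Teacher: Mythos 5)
Your proof is correct and follows essentially the same route as the paper's: rule out conjugacy to $z^d$ and $\pm T_d$, show the only linear polynomial commuting with an iterate of $\bfg$ is the identity by matching coefficients at the two consecutive nonzero exponents $d^n-d+m-1$ and $d^n-d+m-2$, and show no compositional root of $\bfg$ exists by a $t$-degree growth argument after reducing to $\psi\in\C[t][z]$ via Galois descent, then invoke Theorem~\ref{Khoa's theorem}. The only cosmetic differences are the totally-invariant-point argument for the $z^d$ case (the paper uses the normal-form coefficient comparison uniformly) and the valuation-at-$t_0$ argument for $\psi\in\C[t][z]$ (the paper instead tracks the largest index whose coefficient escapes $\C[t]$), and the latter is stated a bit loosely but is fixable.
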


\begin{proof}
The desired conclusion follows from the next three lemmas coupled with Theorem~\ref{Khoa's theorem} describing all polynomials commuting with an iterate of a given polynomial.

\begin{lemma}
\label{lemma 0}
With the above notation, $\bfg(z)$ is not conjugate to $z^d$ or to $\pm T_d(z)$.
\end{lemma}

\begin{proof}[Proof of Lemma~\ref{lemma 0}.]
Since $z^d$, $T_d(z)$ and also $\bfg(z)$ are polynomials in normal form, then assuming that for some linear polynomial $\mu$, we have that $\mu^{-1}\circ \bfg\circ \mu$ is either $z^d$ or $\pm T_d(z)$, we conclude that $\mu(z)=\zeta\cdot z$ for some root of unity $\zeta$. Indeed, letting $\mu(z)=az+b$, we get first that $b=0$ since $\bfg(z)$, $z^d$ and $\pm T_d(z)$ have coefficient equal to $0$ for their monomial of degree $d-1$. Then equating the leading coefficient in each of the above polynomials  yields that $a$ must be a root of unity.  Because $z^d$ and $\pm T_d(z)$ have constant coefficients, i.e., there is no dependence on $t$, we conclude that $\bfg$ is not conjugate to a monomial or $\pm$Chebyshev polynomial.
\end{proof}

\begin{lemma}
\label{lemma 1}
If $\mu(z)$ is a linear polynomial commuting with an iterate of $\bfg$, then $\mu(z)=z$ for all $z$.
\end{lemma}

\begin{proof}[Proof of Lemma~\ref{lemma 1}.]
We let $\mu(z)=az+b$ and assume $\mu\circ \bfg^n=\bfg^n\circ \mu$ for some $n\in\N$.
Again using the fact that $\bfg$ (and thus also $\bfg^n$) is in normal form, we conclude that $b=0$. Then using the fact that $\bfg^n(z)$ has nonzero terms of degrees $d^n-d+m-1$ and $d^n-d+m-2$ (using \eqref{form of g} and \eqref{alpha 2 nonzero 0} along with an easy induction on $n$), we conclude that $1=a^{d^n-d+m-2}=a^{d^n-d+m-3}$; hence $a=1$, as claimed.
\end{proof}

\begin{lemma}
\label{lemma 2}
There is no polynomial $\bfh_1(z)\in K[z]$ and no integer $e>1$ such that $\bfh_1^e=\bfg$.
\end{lemma}

\begin{proof}[Proof of Lemma~\ref{lemma 2}.]
We argue by contradiction and therefore assume $\bfh_1^e=\bfg$ with some integer $e>1$ and some polynomial $\bfh_1\in K[z]$ of degree $s>1$; furthermore, we  assume $\bfh_1$ has minimal degree among all such polynomials. According to Theorem~\ref{Khoa's theorem}~part~(d) along with Lemmas~\ref{lemma 0}~and~\ref{lemma 1}, we know that all polynomials commuting with $\bfg$ are of the form $\bfh_1^n$ for some $n\in\N_0$.

First, we claim that $\bfh_1(z)\in \C(t)[z]$. Indeed, otherwise there exists some Galois automorphism $\tau$ of $K$ fixing $\C(t)$ such that $\bfh_2:=(\bfh_1)^\tau\ne \bfh_1$ (i.e., some coefficient of $\bfh_1$ is not fixed by $\tau$). But then also $\bfh_2^e=\bfg$ (since each coefficient of $\bfg$ is fixed by $\tau$) and therefore $\bfh_2=\bfh_1$ since they both have same degree and commute with $\bfg$. This is a contradiction and so, $\bfh_1(z)\in \C(t)[z]$.

Second, we claim that $\bfh_1\in \C[t][z]$. Because $\bfh_1^e=\bfg$, then we know that $\bfh_1(z)=\sum_{i=0}^s a_iz^i$ for some $a_i\in \C(t)$; since $\bfg(z)$ is monic, we have that $a_s$ is a root of unity. Now, assuming $i_1\in\{0,\dots, s-1\}$ is the largest integer such that $a_{i_1}\notin \C[t]$, an easy induction on $e$ yields that the coefficient of $z^{s(e-1)+i_1}$ in $\bfh_1^e=\bfg$ is not contained in $\C[t]$, which is contradiction.

So, we know that $\bfh_1(z)\in\C[t][z]$.
Since $\bfg$ is in normal form, we conclude that $\bfh_1$ must have no nonzero term of degree $s-1$. Now, let $D$ be the maximum degree in $t$ of the coefficients of $\bfh_1$; clearly, $D\ge 1$ since $\bfg_{t}$ is not a constant family in $t$. Then for all but finitely many $c\in\C$, the degree in $t$ of $\bfh_1(c)$ equals $D$; let $c$ be one such complex number. An easy computation (using the fact that $\bfh_1(z)$ has no terms of degree $s-1$) yields that the degree in $t$ of $\bfh_1^e(c)$ equals $Ds^{e-1}$. On the other hand, the degree in $t$ of $\bfg(c)$ is at most $1$. So, the assumption that $e>1$ yields a contradiction, thus concluding the proof of Lemma~\ref{lemma 2}.
\end{proof}

Lemmas~\ref{lemma 0}, \ref{lemma 1} and \ref{lemma 2}, along with Theorem~\ref{Khoa's theorem} finish the proof of Proposition~\ref{what commutes with g}.
\end{proof}


\section{Proof of our main result}
\label{section proof}

\begin{proof}[Proof of Theorem~\ref{main result}.]
Since the case $m=2$ was proven in \cite[Theorem~1.4]{simultaneous preperiodic}, we assume from now on that $m>2$. Also, we proceed by contradiction, i.e., we assume that the set $\prep(c_1,\dots, c_{m+1})$ is Zariski dense in $\bA^m$; this allows us to apply Theorem~\ref{main useful}.

Now, since the numbers $c_i$ are distinct, then clearly we can find $(m-1)$ of them whose sum is nonzero; so, without loss of generality, we assume that
\begin{equation}
\label{sum of m-1 is nonzero}
\sum_{i=1}^{m-1}c_i \ne 0.
\end{equation}

For each function (not necessarily injective) $\sigma:\{1,\dots, m-1\}\lra \{1,\dots, m-1\}$, we let $L_\sigma\subset \bA^m$ be the line in the parameter space along which the following relations hold:
\begin{equation}
\label{preperiodic line sigma}
\bff(c_i)=c_{\sigma(i)}\text{ for each }i=1,\dots, m-1.
\end{equation}
Indeed, in order to solve the system of equations \eqref{preperiodic line sigma} in the variables $t_i$, we let $t_1:=t$ and then solve each of the $t_i$'s (for $i=2,\dots, m$) in terms of the variable $t$, and in each case we get that $t_i$ is a polynomial $T_{\sigma,i}(t)$ of degree at most $1$. The fact that the system \eqref{preperiodic line sigma} is solvable follows from Cramer's Rule using the fact that the coefficients matrix is an invertible Vandermonde matrix since $c_i\ne c_j$ if $1\le i<j\le m-1$.

Thus, the points $c_i$ (for $i=1,\dots, m-1$) are preperiodic along $L_\sigma$; we let $\bfg_{\sigma}=\bfg_{\sigma,t}$ the specialization of $\bff$ along the line $L_\sigma$. Furthermore,  there exist polynomials $A,B_\sigma\in\C[z]$ such that
\begin{equation}
\label{formula g 1}
\bfg_{\sigma,t}(z)=A(z)t+B_\sigma(z).
\end{equation}
A simple computation (using the fact that $A(z)$ is a monic polynomial of degree  $m-1$ and that $\bfg_{\sigma,t}(c_i)=A(c_i)t+B(c_i)$ is a constant polynomial in $t$ for each $i=1,\dots, m-1$) yields that
\begin{equation}
\label{what is A}
A(z)=\prod_{i=1}^{m-1}(z-c_i),
\end{equation}
which confirms the fact that $A(z)$ is independent of the choice of the function $\sigma$. So, there exist some complex numbers $\alpha_i$ and $\beta_{\sigma,i}$ (for $i=2,\dots, m$) such that
\begin{equation}
\label{form of g 6}
\bfg_{\sigma,t}(z)=z^d+tz^{m-1}+(\alpha_2t+\beta_{\sigma,2})z^{m-2}+\cdots + (\alpha_{m-1}t+\beta_{\sigma,m-1})z+\alpha_mt + \beta_{\sigma,m}
\end{equation}
Furthermore, according to \eqref{what is A}, we have that
\begin{equation}
\label{alpha 2 nonzero}
\alpha_2=-\sum_{i=1}^{m-1}c_i\ne 0.
\end{equation}

Equation \eqref{what is A} yields that for any $c\notin\{ c_1,\dots, c_{m-1}\}$,  we have that $\deg_t(\bfg_{\sigma,t}(c))=1$ and furthermore (by induction), for any $n\ge 1$, we have that
\begin{equation}
\label{degree of iterate g in t}
\deg_t\left(\bfg_{\sigma,t}^n(c)\right)=d^{n-1}.
\end{equation}
Because the points $c_i$ (for $i=1,\dots, m-1$) are persistently preperiodic for $\bfg_{\sigma,t}$, Theorem~\ref{main useful} yields that for each parameter $t\in\C$, we have that $c_m$ is preperiodic for $\bfg_{\sigma,t}$ if and only if $c_{m+1}$ is preperiodic for $\bfg_{\sigma,t}$. Then  \cite[Theorem~1.3]{Matt-Laura-2} yields that there exists some polynomial $\bfh(z) = \bfh_\sigma(x)\in\C[t][z]$ commuting with an iterate of $\bfg_\sigma$ and there exist positive integers $n_m,n_{m+1}$ such that $\bfg_\sigma^{n_m}(c_m)=\bfh\left(\bfg_\sigma^{n_{m+1}}(c_{m+1})\right)$. Proposition~\ref{what commutes with g} (see also \eqref{alpha 2 nonzero}) allows us to assume that $\bfh$ is the identity. Furthermore, using \eqref{degree of iterate g in t}, we conclude that $n_m=n_{m+1}=:n$. Next we prove that we may assume that $n=2$.

\begin{prop}
\label{n is 2}
Let $n$ be an integer larger than $2$. If $\bfg_{\sigma,t}^n(c_m)=\bfg_{\sigma,t}^n(c_{m+1})$, then $\bfg_{\sigma,t}^{n-1}(c_m)=\bfg_{\sigma,t}^{n-1}(c_{m+1})$.
\end{prop}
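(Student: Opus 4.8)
The plan is to argue by contradiction. Write $u:=\bfg_{\sigma,t}^{n-1}(c_m)$ and $v:=\bfg_{\sigma,t}^{n-1}(c_{m+1})$, regard them as polynomials in $t$, and assume $u\neq v$; we must contradict the hypothesis, which says precisely $\bfg_{\sigma,t}(u)=\bfg_{\sigma,t}(v)$. Since $c_m$ and $c_{m+1}$ avoid the set $\{c_1,\dots,c_{m-1}\}$, equation \eqref{degree of iterate g in t} gives $\deg_t u=\deg_t v=D$, where $D:=d^{n-2}$; because $n>2$ and $d>m>1$ we have $D\geq d\geq 3$, and this lower bound on $D$ is what the argument exploits repeatedly.

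First I would subtract $\bfg_{\sigma,t}(v)$ from $\bfg_{\sigma,t}(u)$ using the explicit shape \eqref{form of g 6}; the constant term of $\bfg_{\sigma,t}$ drops out and one is left with
\[
u^d-v^d\;=\;-t\,(u^{m-1}-v^{m-1})\;-\;\sum_{i=2}^{m-1}(\alpha_i t+\beta_{\sigma,i})\,(u^{m-i}-v^{m-i}).
\]
As $\deg_t(u^k-v^k)\leq kD$ for every $k$, the right-hand side has $t$-degree at most $1+(m-1)D$, hence so does $u^d-v^d$. Independently I would use the factorization $u^d-v^d=\prod_{j=0}^{d-1}(u-\zeta_d^j v)$, where $\zeta_d$ is a primitive $d$-th root of unity; each factor lies in $\C[t]$ and has $t$-degree at most $D$.

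The heart of the proof is to show that no factor $u-\zeta_d^j v$ vanishes. If $u=\zeta_d^j v$ for some $j$, the left-hand side of the display is $0$, so $\bfg_{\sigma,t}(u)-\bfg_{\sigma,t}(v)$ reduces to $(\zeta_d^{j(m-1)}-1)\,t\,v^{m-1}+\sum_{i=2}^{m-1}(\alpha_i t+\beta_{\sigma,i})(\zeta_d^{j(m-i)}-1)v^{m-i}=0$. Reading off the coefficient of $t^{1+(m-1)D}$ forces $\zeta_d^{j(m-1)}=1$; reading off the coefficient of $t^{1+(m-2)D}$ in what remains then forces $\zeta_d^{j(m-2)}=1$, where one uses $\alpha_2\neq 0$ from \eqref{alpha 2 nonzero} together with the fact that the exponents $1+(m-i)D$ are strictly decreasing in $i$. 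These two relations give $\zeta_d^j=1$, i.e.\ $u=v$, contrary to assumption. Hence all $d$ factors are nonzero, and at most one of them can have $t$-degree strictly less than $D$: otherwise the difference of two such factors would be a nonzero constant multiple of $v$ of degree $<D=\deg_t v$. Therefore $\deg_t(u^d-v^d)=\sum_{j=0}^{d-1}\deg_t(u-\zeta_d^j v)\geq(d-1)D$, and comparing with the earlier upper bound $1+(m-1)D$ yields $(d-m)D\leq 1$. This is absurd since $(d-m)D\geq 3$, and the contradiction establishes $u=v$.

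The only genuinely delicate point is the middle one: isolating $\zeta_d^{j(m-1)}$ and then $\zeta_d^{j(m-2)}$ via two successive leading-coefficient comparisons. This is where I would be careful that the relevant powers of $t$ really are distinct and that $\alpha_2\neq 0$, and it is also the reason the argument needs $m>2$ (so that the exponent $m-2$ is at least $1$) in addition to $n>2$. Everything else is routine bookkeeping with $t$-degrees.
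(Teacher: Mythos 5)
Your proof is correct and uses essentially the same two ingredients as the paper's proof (the $t$-degree count on $\prod_j(u-\zeta_d^j v)$ with the ``at most one factor can have degree $<D$'' observation, and the two successive leading-coefficient comparisons using $\alpha_2\ne 0$), merely reorganized into a single contradiction argument rather than the paper's ``first produce $\zeta$, then show $\zeta=1$'' two-step structure.
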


\begin{proof}[Proof of Proposition~\ref{n is 2}.]
First we prove that there exists some $d$-th root of unity $\zeta$ such that $\bfg_{\sigma}^{n-1}(c_m)=\zeta\cdot \bfg_{\sigma}^{n-1}(c_{m+1})$, and then we will prove that actually $\zeta=1$.

Using \eqref{degree of iterate g in t}, we have that, as a polynomial in $t$,
\begin{align}
\nonumber
\bfg_{\sigma,t}^n(c_m)\\
\nonumber
& = \left(\bfg_{\sigma,t}^{n-1}(c_m)\right)^d + t\left(\bfg_{\sigma,t}^{n-1}(c_m)\right)^{m-1} + \sum_{i=2}^m (\alpha_it+\beta_{\sigma,i})\cdot \left(\bfg_{\sigma,t}^{n-1}(c_m)\right)^{m-i} \\
\label{dominant term 1}
& =   \left(\bfg_{\sigma,t}^{n-1}(c_m)\right)^d + O\left(t^{d^{n-2}(m-1)+1}\right),
\end{align}
where the big-$O$ term from \eqref{dominant term 1} denotes the fact that the remaining powers of $t$ from the expansion of $\bfg_{\sigma,t}^n(c_m)$ have degree bounded by $d^{n-2}(m-1)+1$. A similar formula holds for $\bfg_{\sigma,t}^n(c_{m+1})$. Therefore, the equality $\bfg_{\sigma,t}^n(c_m)=\bfg_{\sigma,t}^n(c_{m+1})$ yields that
\begin{equation}
\label{dominant term 11}
\deg_t\left(\bfg_{\sigma,t}^{n-1}(c_m)^d-\bfg_{\sigma,t}^{n-1}(c_{m+1})^d\right)\le d^{n-2}(m-1)+1.
\end{equation}
Now, let $\zeta_d$ be a primitive $d$-th root of unity. If there is no $i\in\{0,\dots, d-1\}$ such that $\bfg_{\sigma,t}^{n-1}(c_m)=\zeta_d^i\cdot \bfg_{\sigma,t}^{n-1}(c_{m+1})$, then
\begin{equation}
\label{dominant term 2}
\left(\bfg_{\sigma,t}^{n-1}(c_m)\right)^d - \left(\bfg_{\sigma,t}^{n-1}(c_{m+1})\right)^d = \prod_{i=0}^{d-1}\left(\bfg_{\sigma,t}^{n-1}(c_m) - \zeta_d^i\cdot \bfg_{\sigma,t}(c_{m+1})^{n-1}\right)
\end{equation}
is a polynomial in $t$ of degree at least $\deg_t \left(\bfg_{\sigma,t}(c_m)\right)^{d-1}=d^{(n-2)}\cdot (d-1)$ since at most one of the terms from the product appearing in \eqref{dominant term 2} may have degree less than $\deg_t\left(\bfg_{\sigma,t}^{n-1}(c_m)\right)$. This contradicts \eqref{dominant term 11} (note that $n>2$), thus proving that one of the terms in the product appearing in \eqref{dominant term 2} must be $0$ and so, there exists a root of unity $\zeta=\zeta_d^{i_0}$ (for some $i_0=0,\dots, d-1$) such that
\begin{equation}
\label{dominant term 33}
\bfg_{\sigma,t}^{n-1}(c_m)=\zeta\cdot \bfg_{\sigma,t}^{n-1}(c_{m+1}).
\end{equation}
Next we prove that $\zeta=1$ (i.e., $i_0=0$). For this we need to refine the expansion from \eqref{dominant term 1}, as follows:
\begin{align}
\nonumber
& \bfg_{\sigma,t}^n(c_m)\\
\label{dominant term 3}
& = \left(\bfg_{\sigma,t}^{n-1}(c_m)\right)^d + t\cdot \left(\bfg_{\sigma,t}^{n-1}(c_m)\right)^{m-1}\\
\nonumber
& + (\alpha_2t+\beta_{\sigma,2})\cdot \left(\bfg_{\sigma,t}^{n-1}(c_m)\right)^{m-2} + O\left(t^{d^{n-2}(m-3)+1}\right)
\end{align}
and similarly, using \eqref{dominant term 33}, we get
\begin{align}
\nonumber
& \bfg_{\sigma,t}^n(c_{m+1})\\
\nonumber
& =\left(\bfg_{\sigma,t}^{n-1}(c_{m+1})\right)^d + t\cdot \left(\bfg_{\sigma,t}^{n-1}(c_{m+1})\right)^{m-1}\\
\nonumber
& + (\alpha_2t+\beta_{\sigma,2})\cdot \left(\bfg_{\sigma,t}^{n-1}(c_{m+1})\right)^{m-2} + O\left(t^{d^{n-2}(m-3)+1}\right)\\
\label{dominant term 32}
&  = \left(\bfg_{\sigma,t}^{n-1}(c_{m})\right)^d + t\cdot \zeta^{m-1} \left(\bfg_{\sigma,t}^{n-1}(c_{m})\right)^{m-1} \\
\nonumber
& + (\alpha_2t+\beta_2)\cdot \zeta^{m-2}\left(\bfg_{\sigma,t}^{n-1}(c_{m})\right)^{m-2} + O\left(t^{d^{n-2}(m-3)+1}\right).
\end{align}
The equality $\bfg_{\sigma,t}^n(c_m)=\bfg_{\sigma,t}^n(c_{m+1})$ coupled with expansions \eqref{dominant term 3} and \eqref{dominant term 32} yields first that $\zeta^{m-1}=1$, and then re-using \eqref{dominant term 3} and \eqref{dominant term 32} yields that $\zeta^{m-2}=1$. So, $\zeta=1$, as desired.
\end{proof}

So, we know that $\bfg_{\sigma,t}^2(c_m)=\bfg_{\sigma,t}^2(c_{m+1})$. Using \eqref{formula g 1}, we get that
\begin{align}
\nonumber
& 0=\bfg_{\sigma,t}^2(c_m)-\bfg_{\sigma,t}^2(c_{m+1}) \\
\label{dominant term 4}
& = \left(A(c_m)t+B_{\sigma}(c_m)\right)^d - \left(A(c_{m+1})t+B_{\sigma}(c_{m+1})\right)^d \\
\nonumber
& + t\left(A(c_m)t+B_{\sigma}(c_m)\right)^{m-1} - t\left(A(c_m)t+B_{\sigma}(c_m)\right)^{m-1} + O\left(t^{m-1}\right).
\end{align}
Comparing the terms of degree $d$ we get
\begin{equation}
\label{dominant term 5}
A(c_{m+1})=\xi\cdot A(c_m),
\end{equation}
for some $\xi\in\C$ such that $\xi^d=1$. Note that $\xi$ is independent of $\sigma$, since $A(z)$ is independent of $\sigma$.

\begin{prop}
\label{sigma doesn't matter}
The quantity $B_{\sigma}(c_{m+1})-\xi B_{\sigma}(c_m)$ is independent of the function $\sigma$.
\end{prop}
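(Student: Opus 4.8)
The plan is to extend by one degree the coefficient comparison that produced \eqref{dominant term 5}: I would extract information from the coefficient of $t^{d-1}$ in the identity $\bfg_{\sigma,t}^2(c_m) = \bfg_{\sigma,t}^2(c_{m+1})$. Write $\bfg_{\sigma,t}(z) = A(z)t + B_\sigma(z)$ as in \eqref{formula g 1}; by \eqref{what is A}, $A$ is monic of degree $m-1$, and since $\bfg_{\sigma,t}$ is in the normal form \eqref{form of g 6} we have $B_\sigma(z) = z^d + \beta_{\sigma,2}z^{m-2} + \cdots + \beta_{\sigma,m}$, supported in degree $d$ and in degrees at most $m-2$. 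Set $u_\sigma := \bfg_{\sigma,t}(c_m) = A(c_m)t + B_\sigma(c_m)$ and $v_\sigma := \bfg_{\sigma,t}(c_{m+1})$, which by \eqref{dominant term 5} equals $\xi A(c_m)t + B_\sigma(c_{m+1})$; note $A(c_m) = \prod_{i=1}^{m-1}(c_m - c_i) \ne 0$ since the $c_i$ are distinct. Then, viewing everything as a polynomial in $t$, the identity $\bfg_{\sigma,t}^2(c_m) = \bfg_{\sigma,t}^2(c_{m+1})$ becomes $\big(B_\sigma(u_\sigma) - B_\sigma(v_\sigma)\big) + t\big(A(u_\sigma) - A(v_\sigma)\big) = 0$.

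Next I would read off the coefficient of $t^{d-1}$. Because $u_\sigma, v_\sigma$ are linear in $t$ with leading coefficients $A(c_m)$ and $\xi A(c_m)$, the support of $B_\sigma$ forces $B_\sigma(u_\sigma) - B_\sigma(v_\sigma) = \big(u_\sigma^d - v_\sigma^d\big) + (\text{a polynomial in $t$ of degree at most $m-2$})$, while $t\big(A(u_\sigma) - A(v_\sigma)\big)$ has $t$-degree at most $m$. Since $d > m$, the only contributions to the coefficient of $t^{d-1}$ come from $u_\sigma^d - v_\sigma^d$ and, precisely when $d = m+1$, from $t\big(A(u_\sigma) - A(v_\sigma)\big)$. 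In $u_\sigma^d - v_\sigma^d$ the $t^d$-terms cancel (this is exactly \eqref{dominant term 5}), and the coefficient of $t^{d-1}$ equals, using $\xi^d = 1$ so that $\xi^{d-1} = \xi^{-1}$,
\begin{equation*}
d\,A(c_m)^{d-1}B_\sigma(c_m) - d\,\xi^{d-1}A(c_m)^{d-1}B_\sigma(c_{m+1}) = -d\,\xi^{-1}A(c_m)^{d-1}\big(B_\sigma(c_{m+1}) - \xi B_\sigma(c_m)\big);
\end{equation*}
and when $d = m+1$, the coefficient of $t^{d-1} = t^m$ in $t\big(A(u_\sigma) - A(v_\sigma)\big)$ equals the coefficient of $t^{m-1}$ in $A(u_\sigma) - A(v_\sigma)$, namely $A(c_m)^{m-1} - (\xi A(c_m))^{m-1} = A(c_m)^{m-1}\big(1 - \xi^{m-1}\big)$, which is independent of $\sigma$.

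Equating the coefficient of $t^{d-1}$ to zero then yields
\begin{equation*}
d\,\xi^{-1}A(c_m)^{d-1}\big(B_\sigma(c_{m+1}) - \xi B_\sigma(c_m)\big) = \varepsilon\,A(c_m)^{m-1}\big(1 - \xi^{m-1}\big),
\end{equation*}
where $\varepsilon = 1$ if $d = m+1$ and $\varepsilon = 0$ otherwise. Since $d \ne 0$ (characteristic zero) and $A(c_m) \ne 0$, I can divide to solve for $B_\sigma(c_{m+1}) - \xi B_\sigma(c_m)$, and the resulting expression depends only on $d$, $\xi$, and $A(c_m)$ — all of which are independent of $\sigma$ (recall $A$ is independent of $\sigma$ by \eqref{what is A}, and $\xi$ was already observed to be independent of $\sigma$). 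Hence $B_\sigma(c_{m+1}) - \xi B_\sigma(c_m)$ is independent of $\sigma$, as claimed.

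The only genuine subtlety I anticipate is the bookkeeping of which powers of $t$ feed into the coefficient of $t^{d-1}$: one must use the gap in the support of $B_\sigma$ (no monomials in degrees $m-1, \dots, d-1$) so that $u_\sigma^d - v_\sigma^d$ is the sole relevant piece of $B_\sigma(u_\sigma) - B_\sigma(v_\sigma)$, and one must treat the boundary case $d = m+1$ separately — but even there the extra term is manifestly free of $\sigma$, so the conclusion is unaffected.
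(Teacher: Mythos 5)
Your proof is correct and follows essentially the same route as the paper: both arguments extract the coefficient of $t^{d-1}$ from the identity $\bfg_{\sigma,t}^2(c_m)=\bfg_{\sigma,t}^2(c_{m+1})$, split according to whether $d>m+1$ or $d=m+1$, and observe that the resulting expression for $B_\sigma(c_{m+1})-\xi B_\sigma(c_m)$ involves only $d$, $\xi$, and $A(c_m)$, none of which depend on $\sigma$. Your unified formula with the indicator $\varepsilon$ and your explicit bookkeeping of the support of $B_\sigma$ (no monomials of degrees $m-1,\dots,d-1$) is a slightly cleaner presentation, and indeed specializes at $d=m+1$ to the paper's $\frac{\xi-\xi^{-1}}{dA(c_m)}$ since $\xi^m=\xi^{d-1}=\xi^{-1}$.
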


\begin{proof}[Proof of Proposition~\ref{sigma doesn't matter}.]
Our analysis splits into two cases: either $m<d-1$, or $m=d-1$.

If $m<d-1$, then comparing the coefficient of $t^{d-1}$ in \eqref{dominant term 4}, we get $A(c_m)^{d-1}B_\sigma(c_m)=A(c_{m+1})^{d-1}B_\sigma(c_{m+1})$ and so, \eqref{dominant term 5} yields that $B_{\sigma}(c_{m+1})=\xi\cdot B_{\sigma}(c_m)$ (note that $A(c_m)\ne 0$, according to \eqref{what is A}), thus providing the desired conclusion.

If $m=d-1$, then again comparing the coefficient of $t^{d-1}$ in \eqref{dominant term 4} yields that
\begin{equation}
\label{dominant term 6}
0=dA(c_m)^{d-1}B_{\sigma}(c_m)-dA(c_{m+1})^{d-1}B_{\sigma}(c_{m+1}) + A(c_m)^{d-2}-A(c_{m+1})^{d-2}.
\end{equation}
Using \eqref{dominant term 5} and \eqref{dominant term 6}, we obtain that
$$B_\sigma(c_{m+1})-\xi B_\sigma(c_m) = \frac{\xi-\xi^{-1}}{dA(c_m)}.$$
This concludes the proof of Proposition~\ref{sigma doesn't matter}.
\end{proof}

Using Lagrange interpolation for the polynomial $B_{\sigma}(z)-z^d$ which has degree at most $m-2$, one computes that
\begin{equation}
\label{what is B}
B_\sigma(z)=z^d+\sum_{i=1}^{m-1} \left(c_{\sigma(i)}-c_i^d\right) \cdot \frac{A(z)}{(z-c_i)\cdot A'(c_i)},
\end{equation}
where $A'(z)$ is the derivative of the polynomial $A(z)$.
Next we will consider two special functions $\sigma$: one of them is the identity function $\sigma_1$ which maps $c_i$ to $c_i$ for each $i=1,\dots, m-1$, while the second function $\sigma_2$ differs from $\sigma_1$ only when evaluated at $c_1$, i.e,
$$\sigma_2(c_1)=c_2\text{ and }\sigma_2(c_i)=c_i\text{ for }i=2,\dots, m-1.$$
Proposition~\ref{sigma doesn't matter} yields that
\begin{equation}
\label{dominant term 7}
B_{\sigma_2}(c_{m+1})-B_{\sigma_1}(c_{m+1})=\xi\cdot \left(B_{\sigma_2}(c_{m+1})-B_{\sigma_1}(c_m)\right).
\end{equation}
Using \eqref{what is B} along with \eqref{dominant term 7} yields that
\begin{align*}
  0 & =\frac{(c_2-c_1)A(c_{m+1})}{A'(c_1)(c_{m+1}-c_1)} - \frac{(c_2-c_1)\cdot \xi A(c_m)}{A'(c_1)(c_m-c_1)} \\
  & = \frac{(c_2-c_1)A(c_{m+1})}{A'(c_1)}\left(\frac{1}{(c_{m+1}-c_1)}-\frac{1}{(c_{m}-c_1)}\right) \quad \text{since $A(c_{m+1}) = \xi A(c_m)$}\\
& = \frac{(c_2-c_1)A(c_{m+1})}{A'(c_1)}\cdot \frac{c_m-c_{m+1}}{(c_{m+1}-c_1)(c_m-c_1)}
\end{align*}
Therefore, either  $c_{m+1}=c_m$, or $A(c_{m+1})=0$, i.e., $c_{m+1}=c_i$ for some $i=1,\dots, m-1$; this contradicts the fact that the starting points $c_i$ are all distinct. In conclusion, $\prep(c_1,\dots, c_{m+1})$ is contained in finitely many hypersurfaces of  $\bA^m$.
\end{proof}





\end{document}